\documentclass[preprint,10pt]{elsarticle}

\usepackage[a4paper,left=18mm,right=18mm]{geometry}

\biboptions{numbers,sort&compress}

\usepackage{amsmath,amssymb,amsthm,booktabs,mathtools}
\usepackage{adjustbox}
\usepackage{microtype}
\usepackage{needspace}
\usepackage{hyperref}
\hypersetup{colorlinks=true,linkcolor=blue,citecolor=blue,urlcolor=blue,pdfauthor={Jeong-Hoon Ju, Hayoung Choi, Taehyeong Kim}}

\newtheorem{theorem}{Theorem}[section]
\newtheorem{corollary}[theorem]{Corollary}

\newtheorem{definition}[theorem]{Definition}

\newtheorem{remark}[theorem]{Remark}

\numberwithin{equation}{section}

\begin{document}
\begin{frontmatter}
\def\floatpagepagefraction{1}
\def\textpagefraction{.001}

\title{Improved Upper Bounds for the Rank of the Determinant Tensor via Glynn's Formula}

\author[1]{Jeong-Hoon Ju}
\ead{jeonghoon.ju@math.ku.dk}

\author[2,3]{Hayoung Choi}
\ead{hayoung.choi@knu.ac.kr}

\author[3]{Taehyeong Kim\corref{cor1}}

\ead{thkim0519@knu.ac.kr}

\affiliation[1]{organization={Department of Mathematical Sciences, University of Copenhagen},
            addressline={5 Universitetsparken},
            city={Copenhagen {\O}},
            postcode={2100},
            country={Denmark}}

\affiliation[2]{organization={Department of Mathematics, Kyungpook National University},
            city={Daegu},
            postcode={41566},
            country={Republic of Korea}}

\affiliation[3]{organization={Nonlinear Dynamics and Mathematical Application Center, Kyungpook National University},
            city={Daegu},
            postcode={41566},
            country={Republic of Korea}}

\cortext[cor1]{Corresponding author.}

\begin{abstract}
    In this paper, we improve the upper bound for the tensor rank of the $n \times n$ determinant tensor to $\mathcal{B}_n-2^{n-1}+n+1$ by finding a new tensor decomposition of the tensor, where $\mathcal{B}_n$ denotes the $n$-th Bell number. The main idea is to replace a part of the tensor decomposition suggested by Houston--Goucher--Johnston with a tensor that is essentially the same as the $n \times n$ permanent tensor, and to use Glynn's formula. This upper bound gives an improved upper bound for the Waring rank of the $n \times n$ determinant polynomial for $n \geq 17$ as well.
\end{abstract}

\begin{keyword}
determinant \sep tensor decomposition \sep tensor rank \sep Waring decomposition \sep Waring rank
\MSC{15A69, 15A15, 14N07}
\end{keyword}

\end{frontmatter}

\section{Introduction}

A tensor is a multilinear function from a product of vector spaces to the base field. That is, a tensor is a generalization of bilinear function. For a bilinear function, its rank is defined to be the rank of the corresponding matrix. As a generalization of the notion of matrix rank, the \emph{tensor rank} of a tensor $T$ is defined by the smallest number of decomposable tensors required to express $T$ as their sum, and denoted by $\mathbf{R}(T)$ (see Definition \ref{definition:tensor rank}). In addition, as a generalization of the notion of rank of a symmetric matrix, the \emph{Waring rank} of a homogeneous polynomial $F$ is defined by the smallest number of linear forms required to express $F$ as a linear combination of them, and denoted by $\mathbf{R}_S(F)$ (see Definition \ref{definition:Waring rank}). Tensor rank and Waring rank are of interest in algebraic complexity theory, as they can be viewed as arithmetic circuit complexity measures associated with set-multilinear $\Sigma\Pi\Sigma$ circuits and $\Sigma\Lambda\Sigma$ circuits, respectively (see \citep{shpilka2010arithmetic, landsberg2017geometry}). They are also of interest in algebraic geometry, as their close connections with secant varieties of the Segre and Veronese varieties, respectively (see \citep{landsberg2012tensors}). Although there are several efficient ways to compute the rank of a matrix, they have not sufficiently generalized for tensor rank and Waring rank, so much effort has been devoted to improving the upper and lower bounds for these quantities.

In this paper, we improve the upper bounds for the tensor rank of the $n \times n$ \emph{determinant tensor}
\begin{equation*}
\operatorname{det}_n:=\sum_{\sigma \in \mathfrak{S}_n} \operatorname{sgn}(\sigma)~e_{\sigma(1)}\otimes {\cdots} \otimes e_{\sigma(n)} \in (\mathbb{K}^n)^{\otimes n},
\end{equation*}
where $\{e_1,...,e_n\}$ is the standard basis of $\mathbb{K}^n$, and the Waring rank of the $n \times n$ \emph{determinant polynomial}
\begin{equation*}
\operatorname{Det}_n:=\sum_{\sigma \in \mathfrak{S}_n} \operatorname{sgn}(\sigma)\prod_{i=1}^nx_{i,\sigma(i)} \in \mathbb{K}[x_{ij}~|~1 \leq i,j \leq n]_n,
\end{equation*}
where $x_{ij}$'s are independent variables.
Here, $\mathfrak{S}_n$ denotes the symmetric group of $n$ letters. 
Since the determinant is linear with respect to each column (or each row) of the matrix, then it is a tensor. If we write this tensor with respect to the standard basis of $\mathbb{K}^n$, then we obtain the expression of $\operatorname{det}_n$ above. Moreover, if we evaluate a generic $n \times n$ matrix $X=[x_{ij}]_{1 \leq i,j \leq n}$ to the determinant tensor $\operatorname{det}_n$, then we obtain the determinant polynomial $\operatorname{Det}_n$. Despite the fundamental nature of these objects, the tensor rank of $\operatorname{det}_n$ and the Waring rank of $\operatorname{Det}_n$ remain largely undetermined.

The main result of this paper is presented in the following theorem. 

\begin{theorem}\label{thm:Main Theorem}
    Let $\mathcal{B}_n$ denote the $n$-th Bell number. If $\operatorname{char}(\mathbb{K})=0$, then the following upper bounds hold:
    \begin{itemize}
        \item [(\romannumeral1)] $\mathbf{R}(\operatorname{det}_n) \leq \mathcal{B}_n-2^{n-1}+n+1$.
        \item [(\romannumeral2)] $\mathbf{R}_S(\operatorname{Det}_n) \leq 2^{n-1} \cdot (\mathcal{B}_n-2^{n-1}+n+1)$.
    \end{itemize}
\end{theorem}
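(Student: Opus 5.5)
The plan is to start from the Houston--Goucher--Johnston decomposition, which writes $\operatorname{det}_n$ as a sum of $\mathcal{B}_n$ rank-one tensors, one for each set partition $P$ of $[n]$. I will write this decomposition out explicitly and check it, as an inclusion--exclusion over partitions: each term is built from the blocks of $P$ and carries a Möbius-type coefficient, and the terms in which two positions receive the same basis vector cancel. Then I will single out the terms indexed by partitions with at most two blocks. There are exactly $2^{n-1}$ of these: the one-block partition, plus $(2^n-2)/2$ two-block partitions.

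The key step is to show that the sum $T$ of these $2^{n-1}$ rank-one terms has tensor rank at most $n+1$. In each such term the $j$-th factor depends only on which of the (at most two) blocks contains $j$. So I expect every factor to lie in a fixed two-dimensional subspace $U_j=\operatorname{span}(u_j,v_j)\subset\mathbb{K}^n$, with $u_j$ playing the role of ``first block'' and $v_j$ of ``second block''. After applying the linear maps $\phi_j:\mathbb{K}^2\to U_j$, $T$ becomes the image of a tensor $S\in(\mathbb{K}^2)^{\otimes n}$. The coefficient of $S$ on $a_{i_1}\otimes\cdots\otimes a_{i_n}$ should depend only on the multiset $\{i_1,\dots,i_n\}$, so $S$ is a symmetric tensor. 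Here the reduction is the essentially permanent-like structure: the two-block sum is symmetric under permuting positions.

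If, after rescaling $u_j,v_j$, the coefficients turn out symmetric, then $S$ corresponds to a binary form of degree $n$. By Sylvester's theorem, or directly via the Glynn-type identity $\prod_j x_j=\frac{1}{2^{n-1}}\sum_{\delta}\bigl(\prod\delta_i\bigr)\bigl(\sum\delta_i x_i\bigr)^n$ specialized to two variables, such a form has Waring rank at most $n+1$ in characteristic $0$. Hence $\mathbf{R}(S)\le n+1$, and therefore $\mathbf{R}(T)\le n+1$. Combining this with the remaining $\mathcal{B}_n-2^{n-1}$ terms gives (i).

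For (ii), I evaluate each rank-one term of the decomposition from (i) on the generic matrix $X$. The term $w_1\otimes\cdots\otimes w_n$ becomes $\prod_{j}\ell_j$, where $\ell_j$ is a linear form in the $j$-th column variables. Glynn's (Fischer's) formula
\[\ell_1\cdots\ell_n=\frac{1}{2^{n-1}n!}\sum_{\delta\in\{\pm1\}^{n},\,\delta_1=1}\Bigl(\prod_{i}\delta_i\Bigr)\Bigl(\sum_i\delta_i\ell_i\Bigr)^n\]
writes each such product as a combination of $2^{n-1}$ $n$-th powers. This gives $\mathbf{R}_S(\operatorname{Det}_n)\le 2^{n-1}\mathbf{R}(\operatorname{det}_n)$.

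The main obstacle is the second step: verifying, from the explicit HGJ coefficients, that the at-most-two-block terms really do collapse to a single symmetric tensor in $(\mathbb{K}^2)^{\otimes n}$ with consistent choices of $u_j,v_j$. If exact symmetry fails, I would first try diagonal rescalings of $u_j,v_j$. Failing that, I would identify $S$ directly with a permanent-type tensor and apply Glynn's formula to it to obtain the $n+1$ bound.
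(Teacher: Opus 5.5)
\textbf{There is a genuine gap in (i).} Your key claim is that the $2^{n-1}$ Houston--Goucher--Johnston terms indexed by partitions with at most two blocks sum to a tensor of rank at most $n+1$. That claim is false, and so is the structural premise behind it.

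The factor $v_j(P)$ is not determined by a block label. It is $\sum_{k\in A\setminus\{j\}}e_k$ for the actual block $A\ni j$, or $e_j$ plus the sum of $e_k$ over all non-singleton blocks when $\{j\}$ is a block. As $P$ varies, the $j$-th factors span all of $\mathbb{K}^n$; for $n=4$, $j=1$ they include $e_2,e_3,e_4$ and $e_1+e_2+e_3+e_4$. So there is no fixed plane $U_j$, and the reduction to a binary form never gets started.

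Concretely, for $n=4$ your eight terms come from $\{\{1,2,3,4\}\}$, the four $3$-element single parts, and the three pairings such as $\{\{1,2\},\{3,4\}\}$. Expanding them, their sum $T$ has the following coefficients:
\begin{itemize}
\item $1$ on every fixed-point-free $f\colon[4]\to[4]$ with $|f([4])|=2$;
\item $1$ on every double transposition;
\item $1$ on every $f$ with a unique fixed point $k$ and $k\notin f([4]\setminus\{k\})$;
\item $-1$ on every $4$-cycle;
\item $0$ otherwise.
\end{itemize}
Consider the flattening of $T$ separating positions $\{1,2\}$ from $\{3,4\}$. Its rows indexed by $(2,1),(1,3),(2,3),(2,4),(3,2),(3,4),(4,3)$ are linearly independent, so this flattening has rank $7$. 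Hence $\mathbf{R}(T)\ge 7>5=n+1$. That your count equals $\mathcal{B}_n-2^{n-1}+n+1$ is only a numerical coincidence.

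Your fallback does not help either. Glynn's formula costs $2^{n-1}$ terms, not $n+1$, so applying it to a group that already has $2^{n-1}$ terms saves nothing.

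\textbf{The missing idea is the right grouping.} The paper takes the $2^n-n$ terms with $|P|\le 1$, i.e.\ the empty partial partition and all single-part ones.
\begin{itemize}
\item In the term for $\{S\}$, the tensor $e_{f(1)}\otimes\cdots\otimes e_{f(n)}$ appears exactly when $\{f(i)\mid f(i)\neq i\}\subseteq S\subseteq[n]\setminus\{i\mid f(i)=i\}$.
\item The alternating sum over such $S$ therefore vanishes unless $f$ is a permutation $\sigma$. For $\sigma\neq\mathrm{id}$ it equals $(-1)^{n+1+|\{i\mid \sigma(i)=i\}|}$.
\item Accounting for the identity, which comes from the empty partial partition, these terms sum to $2e_1\otimes\cdots\otimes e_n+(-1)^{n+1}\operatorname{perm}_{n,H}$ with $H=\mathbf{1}_{n\times n}-2I_{n\times n}$.
\item Glynn's formula gives $\mathbf{R}(\operatorname{perm}_{n,H})\le 2^{n-1}$.
\end{itemize}
Replacing $2^n-n$ terms by $2^{n-1}+1$ gives (i). Your derivation of (ii) from (i) via Fischer's identity is correct and is the same reduction the paper uses.
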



We prove that a part of the tensor decomposition suggested in \citep{houston2024bell} is essentially the same as the $n \times n$ permanent tensor (see (\ref{eq:permanent tensor})), and modify it using Glynn's formula (see Theorem \ref{theorem:glynn's formula}). This gives a new tensor decomposition of $\operatorname{det}_n$ described in Theorem \ref{thm:key Theorem}, and the upper bound for $\mathbf{R}(\operatorname{det}_n)$. We get the upper bound for $\mathbf{R}_S(\operatorname{Det}_n)$ using the relation (\ref{eq:inequality between Waring rank and tensor rank}) below. The comparisons of upper bounds above with other previous upper bounds are presented in Tables \ref{table1} and \ref{table2} for small $n$. Theorem \ref{thm:Main Theorem} gives the best known upper bound of the tensor rank for all $n \geq 3$, and the best upper bound of the Waring rank for $n \geq 17$.

\begin{table}[h!]
\centering
\begin{adjustbox}{max width=\linewidth}\begin{tabular}{ |c|c|c|c|c|c|c|c|c| }
\hline
 $n$  & 3 & 4 & 5 & 6 & 7 & 8 & ${\cdots}$ \\
 \hline
 Houston--Goucher--Johnston \citep{houston2024bell}  & \textbf{5} & 15 & 52 & 203 & 877 & 4140 & ${\cdots}$\\
 \hline
 Ju--Kim--Kim \citep{ju2025finding, ju2025symmetries}  & 6 & \textbf{12} & 60 & 180 & 1260 & 5040 & ${\cdots}$\\
 \hline
 Theorem \ref{thm:Main Theorem} & \textbf{5} & \textbf{12} & \textbf{42} & \textbf{178} & \textbf{821} & \textbf{4021} & ${\cdots}$\\
 \hline
\end{tabular}\end{adjustbox}
\caption{Comparison of upper bounds for $\mathbf{R}(\operatorname{det}_n)$. For each $n$, the best bound is in bold.}
\label{table1}
\end{table}
\begin{table}[h!]
\centering
\begin{adjustbox}{max width=\linewidth}\begin{tabular}{ |c|c|c|c|c|c|c|c|c| }
\hline
 $n$  & 3 & 4 & 5 & ${\cdots}$ & 16 & 17 & ${\cdots}$ \\
 \hline
  Johns--Teitler \citep{johns2022}  & \textbf{18} & \textbf{96} & \textbf{600} & ${\cdots}$ & $\boldsymbol{\approx 3.35 \times 10^{14}}$  & $\approx 6.05 \times 10^{15}$ & ${\cdots}$\\
 \hline
 Houston--Goucher--Johnston \citep{houston2024bell}  & 20 & 120 & 832 & ${\cdots}$ & $\approx 3.43 \times 10^{14}$ & $\approx 5.43 \times 10^{15}$ & ${\cdots}$\\
 \hline
 Theorem \ref{thm:Main Theorem} & 20 & \textbf{96} & 672 & ${\cdots}$ & $\approx 3.43 \times 10^{14}$ & $\boldsymbol{\approx 5.43 \times 10^{15}}$ & ${\cdots}$\\
 \hline
\end{tabular}\end{adjustbox}
\caption{Comparison of upper bounds for $\mathbf{R}_S(\operatorname{Det}_n)$. The bounds by Johns--Teitler \citep{johns2022} hold over $\mathbb{C}$. For each $n$, the best bound is in bold.}
\label{table2}
\end{table}

The organization of this paper is as follows. In Section \ref{section:Preliminaries}, we review the notions of tensor rank and Waring rank, then some previous results relevant to Theorem \ref{thm:Main Theorem}. In Section \ref{section:Determinant formula using Glynn's formula}, we prove Theorem \ref{thm:Main Theorem} over an arbitrary field with characteristic zero. In Section \ref{section:Fields of nonzero characteristic}, we deal with the ranks over  fields of nonzero characteristic. Finally, in Section \ref{section:Further directions}, we suggest several directions for further studies.

\section{Preliminaries}\label{section:Preliminaries}

Throughout this paper, we assume that the base field $\mathbb{K}$ is of characteristic zero unless otherwise stated, and we use the following notations: $\mathfrak{S}_n$ denotes the symmetric group of $n$ letters, $[n]:=\{1,2,...,n\}$, $\mathbf{1}_{n \times n}$ denotes the $n \times n$ matrix with all entries being $1$, and $I_{n \times n}$ denotes the $n \times n$ identity matrix.

\subsection{Tensor rank and Waring rank}

\begin{definition}[Tensor rank]\label{definition:tensor rank}
    Let $V_1,...,V_d$ be finite-dimensional $\mathbb{K}$-vector spaces, and let $T \in V_1 \otimes {\cdots} \otimes V_d$ be nonzero. The number
    \begin{equation*}
\adjustbox{max width=\linewidth}{$\displaystyle
\mathbf{R}(T):=\min\left\{k~\middle|~T=\sum_{i=1}^k v_{i,1} \otimes {\cdots} \otimes v_{i,d},~\text{where}~v_{i,j} \in V_j~\text{for each}~i \in [k]~\text{and}~j \in [d]\right\}
$}
\end{equation*}
    is called the \emph{tensor rank} of $T$. We refer to such a formula of $T$ on the right-hand side as a \emph{tensor decomposition} of $T$.
\end{definition}

\begin{definition}[Waring rank]\label{definition:Waring rank}
    Let $F \in \mathbb{K}[x_1,...,x_n]_d$, that is, $F$ is a homogeneous polynomial of degree $d$. The number
    \begin{equation*}
\adjustbox{max width=\linewidth}{$\displaystyle
\mathbf{R}_S(F):=\min\left\{k~\middle|~F=\sum_{i=1}^kc_i(l_i)^d,~\text{where}~c_i \in \mathbb{K}~\text{and}~l_i\in \mathbb{K}[x_1,...,x_n]_1~\text{for each}~i \in [k]\right\}
$}
\end{equation*}
    is called the \emph{Waring rank} of $F$.
\end{definition}

The Waring rank is often called the symmetric rank, because a homogeneous polynomial corresponds to a symmetric tensor (which is invariant under an arbitrary transposition of variables) and the Waring rank is the same as the smallest number of symmetric rank one tensors required to express the symmetric tensor as a linear combination of these tensors. We refer to \citep{landsberg2012tensors} for more details. 

For the determinant tensor $\operatorname{det}_n$ and the determinant polynomial $\operatorname{Det}_n$, we obtain the following inequality
\begin{equation}\label{eq:inequality between Waring rank and tensor rank}
\mathbf{R}_S(\operatorname{Det}_n) \leq 2^{n-1}\cdot \mathbf{R}(\operatorname{det}_n)
\end{equation}
from the fact $\mathbf{R}_S(x_1{\cdots} x_n)=2^{n-1}$ (see \citep{carlini2012solution}). We refer to \citep{houston2024bell} for more detailed argument. 

\subsection{Previous relevant results}

For $n \geq 7$, the previous best upper bound of $\mathbf{R}(\operatorname{det}_n)$ was the $n$-th Bell number $\mathcal{B}_n$ by \citep{houston2024bell}. Since we will use this result, we explain it in more detail. A \emph{partial partition} of $[n]$ is a set of disjoint subsets of $[n]$, and a subset in a partial partition is called a \emph{part}. We let $\operatorname{PP}(n)$ denote the set of partial partitions of $[n]$ without singletons. Since there is a natural 1-1 correspondence between partial partitions of $[n]$ with no singletons and partitions of $[n]$, we obtain $|\operatorname{PP}(n)|=\mathcal{B}_n$. For $i,j \in [n]$, we let $i \underset{P}{\sim} j$ mean that there is a part in $P$ containing both $i$ and $j$. Then, for $k \in [n]$, it is guaranteed that $k \underset{P}{\sim} k$ if and only if there is a part containing $k$. The main result in \citep{houston2024bell} is as in the following theorem.

\begin{theorem}[see \citep{houston2024bell}]\label{theorem:bell number}
    The $n \times n$ determinant tensor has the following tensor decomposition:
    \begin{equation}\label{eq:Houston formula}
\operatorname{det}_n=\sum_{P \in \operatorname{PP}(n)} \operatorname{sgn}(P)\cdot|P|!\bigotimes_{i=1}^nv_i(P),
\end{equation}
    where $\operatorname{sgn}(P)=\prod_{S \in P}(-1)^{|S|+1}$ and 
\begin{equation*}
v_i(P)=\begin{cases}
            \sum\limits_{j \underset{P}{\sim} i, j \neq i} e_j &~\text{if}~i \underset{P}{\sim} i\\
            e_i+\sum\limits_{j \underset{P}{\sim} j} e_j & \text{if}~i \underset{P}{\nsim} i
        \end{cases}.
\end{equation*}
    In particular, $\mathbf{R}(\operatorname{det}_n) \leq \mathcal{B}_n$.
\end{theorem}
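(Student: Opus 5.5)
We would prove \eqref{eq:Houston formula} by comparing coefficients on the standard basis. For a function $f:[n]\to[n]$, the coefficient of $e_{f(1)}\otimes\cdots\otimes e_{f(n)}$ on the left is $\operatorname{sgn}(f)$ if $f$ is a permutation and $0$ otherwise. On the right it is
\[
c(f)=\sum_{P} \operatorname{sgn}(P)\,|P|!,
\]
where $P$ ranges over those elements of $\operatorname{PP}(n)$ with $e_{f(i)}$ in the support of $v_i(P)$ for all $i$. Let $C$ be the union of the parts of $P$. The support conditions read as follows. For $i\in C$ we need $f(i)\neq i$ and $f(i)\underset{P}{\sim} i$. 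For $i\notin C$ we need $f(i)=i$ or $f(i)\in C$.

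\textbf{Step 1: reduce to a sum over covered sets $C$.} The conditions say that $C$ is $f$-invariant and contains no fixed point of $f$. They also say that $f(i)\in C$ for every non-fixed $i\notin C$, and that each part of $P$ is a union of connected components of the functional graph of $f|_C$. Every such component contains a cycle of length $\geq 2$. Conversely, any such $C$ together with any set partition $\pi$ of the $m=m(C)$ components of $f|_C$ gives an admissible $P$ with $|P|=|\pi|$. Since $(-1)^{|S|+1}$ depends only on $|S|$, we get $\operatorname{sgn}(P)=(-1)^{|C|}(-1)^{|\pi|}$. The sum over $\pi$ is then
\[
\sum_j S(m,j)(-1)^j j! = (-1)^m .
\]
This holds by evaluating $x^m=\sum_j S(m,j)\,x(x-1)\cdots(x-j+1)$ at $x=-1$, and it also covers $m=0$. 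Hence
\[
c(f)=\sum_{C\ \text{admissible}}(-1)^{|C|+m(C)} .
\]

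\textbf{Step 2: the permutation case.} If $f$ is a permutation, then $f(C)=C$ forces $f$ to fix $[n]\setminus C$ pointwise. Since $C$ contains no fixed points, $C$ is exactly the set of non-fixed points. So there is a unique admissible $C$. Its components are the nontrivial cycles, and $(-1)^{|C|+m}=\prod_{\text{cycles}}(-1)^{\text{len}+1}=\operatorname{sgn}(f)$.

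\textbf{Step 3: the non-permutation case, which is the main point.} Suppose some non-fixed $i$ has $f(i)$ a fixed point. Then no admissible $C$ exists: $i\notin C$ would require $f(i)\in C$, and $i\in C$ would require $f(i)\in C$ by invariance, but fixed points are excluded from $C$. So $c(f)=0$.

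Otherwise, every non-fixed point lies in a component of $f$ whose cycle has length $\geq 2$. Because $f$ is not a permutation, some such component has a vertex not on its cycle, and hence contains a leaf $\ell$, i.e.\ a point with $f^{-1}(\ell)=\emptyset$ and $\ell$ not on a cycle. We then use the involution $C\mapsto C\triangle\{\ell\}$.

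\begin{itemize}
\item Adding $\ell$ preserves admissibility. If $\ell\notin C$, then $f(\ell)\in C$, so $C\cup\{\ell\}$ stays invariant.
\item Removing $\ell$ preserves admissibility. Nothing maps to $\ell$, so invariance survives, and $f(\ell)\in C$ still holds.
\item The toggle does not change $m(C)$. When $\ell\in C$ it hangs off $f(\ell)\in C$, so it lies in the same component as $f(\ell)$.
\end{itemize}

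The toggle does change $|C|$ by one, so the involution is sign-reversing and $c(f)=0$. The care needed here is in verifying that admissibility and the component count are preserved, which the three points above address.

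Finally, since \eqref{eq:Houston formula} has $|\operatorname{PP}(n)|=\mathcal{B}_n$ terms and each is a decomposable tensor, we obtain $\mathbf{R}(\operatorname{det}_n)\leq\mathcal{B}_n$.
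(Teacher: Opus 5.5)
Your proof is correct and complete. The paper gives no proof of this statement; it quotes it from \citep{houston2024bell}. The nearest comparison is therefore the proof of Theorem~\ref{thm:key Theorem}. That proof runs your method only for the single-part terms $P=\{S\}$: extract the coefficient of $e_{f(1)}\otimes\cdots\otimes e_{f(n)}$, characterize the admissible supports, and cancel by toggling one element. In that special case the admissible $S$ form the Boolean interval $\{f(i)\mid f(i)\neq i\}\subset S\subset[n]\setminus\{i\mid f(i)=i\}$. So any element of the gap can be toggled, and the only weight is $(-1)^{|S|+1}$. For general $P$ two further ingredients are needed, and you supply both. The first is the passage from $P$ to its covered set $C$, together with the identity $\sum_j S(m,j)(-1)^j j!=(-1)^m$; this absorbs the factor $|P|!$ and the freedom in grouping the components of $f|_C$ into parts. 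The second is the choice of a leaf $\ell$ of the functional graph. This is needed because $f$-invariance means the admissible $C$ no longer form an interval, and the leaf is exactly what makes the toggle preserve admissibility while leaving $m(C)$ unchanged. In your third bullet you could state explicitly why removing $\ell$ cannot split a component: $f^{-1}(\ell)=\emptyset$, so its only edge goes to $f(\ell)$. What your route buys is a self-contained verification of \eqref{eq:Houston formula} that uses only integer identities, so it holds over every field. That is exactly what is needed when the formula is truncated at $|P|\le p-1$ in Section~\ref{section:Fields of nonzero characteristic}.
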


For example, when $n=4$, we have the following tensor decomposition from (\ref{eq:Houston formula}):
\begin{equation}\label{eq:Houston formula of det4}
\begin{aligned}
        \operatorname{det}_4=&e_1 \otimes e_2 \otimes e_3 \otimes e_4\\
        &-e_2 \otimes e_1 \otimes (e_1+e_2+e_3) \otimes (e_1+e_2+e_4)\\
        &-e_3 \otimes (e_1+e_2+e_3) \otimes e_1 \otimes (e_1+e_3+e_4)\\
        &-e_4 \otimes (e_1+e_2+e_4) \otimes (e_1+e_3+e_4) \otimes e_1\\
        &-(e_1+e_3+e_4) \otimes (e_2+e_3+e_4) \otimes e_4 \otimes e_3\\
        &-(e_1+e_2+e_4) \otimes e_4 \otimes (e_2+e_3+e_4) \otimes e_2\\
        &-(e_1+e_2+e_3) \otimes e_3 \otimes e_2 \otimes (e_2+e_3+e_4)\\
        &+(e_2+e_3) \otimes (e_1+e_3) \otimes (e_1+e_2) \otimes (e_1+e_2+e_3+e_4)\\
        &+(e_2+e_4) \otimes (e_1+e_4) \otimes (e_1+e_2+e_3+e_4) \otimes (e_1+e_2)\\
        &+(e_3+e_4) \otimes (e_1+e_2+e_3+e_4) \otimes (e_1+e_4) \otimes (e_1+e_3)\\
        &+(e_1+e_2+e_3+e_4) \otimes (e_3+e_4) \otimes (e_2+e_4) \otimes (e_2+e_3)\\
        &-(e_2+e_3+e_4) \otimes (e_1+e_3+e_4) \otimes (e_1+e_2+e_4) \otimes (e_1+e_2+e_3)\\
        &+2e_2 \otimes e_1 \otimes e_4 \otimes e_3\\
        &+2e_3 \otimes e_4 \otimes e_1 \otimes e_2\\
        &+2e_4 \otimes e_3 \otimes e_2 \otimes e_1.
    \end{aligned}
\end{equation}

In the case of $4 \times 4$ and $6 \times 6$ determinant tensors, the upper bounds for their tensor ranks improved upon the Bell-number bounds in \citep{ju2025finding, ju2025symmetries}. However, the bounds in \citep{ju2025symmetries} for $n \geq 7$ are bigger than the Bell numbers $\mathcal{B}_n$ as in Table \ref{table1}. On the other hand, if $n \geq 17$, then the previous best upper bound of $\mathbf{R}_S(\operatorname{Det}_n)$ was $2^{n-1}\cdot \mathcal{B}_n$ by Theorem \ref{theorem:bell number} and the relation (\ref{eq:inequality between Waring rank and tensor rank}). If $n \leq 16$ and $\mathbb{K}=\mathbb{C}$, then the best upper bound of $\mathbf{R}_S(\operatorname{Det}_n)$ was $n \cdot n!$ by \citep{johns2022} as in Table \ref{table2}.

For the proof of Theorem \ref{thm:key Theorem} below, we introduce the $n \times n$ permanent tensor $\operatorname{perm}_n$. Since the permanent is linear with respect to each column (or row) of the matrix, then it is a tensor. If we write it with respect to the standard basis $\{e_1,...,e_n\}$ of $\mathbb{K}^n$, then we obtain the expression
\begin{equation}\label{eq:permanent tensor}
\operatorname{perm}_n=\sum_{\sigma \in \mathfrak{S}_n} e_{\sigma(1)}\otimes {\cdots} \otimes e_{\sigma(n)} \in (\mathbb{K}^n)^{\otimes n}.
\end{equation}
If we evaluate a generic $n \times n$ matrix $X=[x_{ij}]_{1 \leq i,j \leq n}$ to $\operatorname{perm}_n$, then we obtain the $n \times n$ permanent polynomial $\operatorname{Perm}_n \in \mathbb{K}[x_{ij}~|~1 \leq i,j \leq n]_n$. Glynn proved that $\operatorname{Perm}_n$ admits an explicit formula.

\begin{theorem}[Glynn's formula, see \citep{glynn2010permanent}]\label{theorem:glynn's formula}
    Let $\mathcal{I}_n$ be a set defined by
    \begin{equation}\label{eq:index set I}
\mathcal{I}_n=\{\delta=(\delta_1,\delta_2,\ldots,\delta_n) \in \{\pm 1\}^{n}~|~\delta_1=1\}.
\end{equation}
    Then the $n \times n$ permanent polynomial can be expressed by
    \begin{equation*}
\operatorname{Perm}_n=\frac{1}{2^{n-1}}\sum_{\delta \in \mathcal{I}_n}\left(\prod_{k=1}^n \delta_k\right) \prod_{j=1}^n \left(\sum_{i=1}^n\delta_ix_{ij}\right).
\end{equation*}
    We call it Glynn's formula.
\end{theorem}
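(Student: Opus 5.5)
The plan is to expand the right-hand side of Glynn's formula as a polynomial in the $x_{ij}$, swap the order of summation so the sum over $\delta$ is innermost, and show that this inner character sum over $\{\pm1\}^{n-1}$ kills every monomial except those coming from permutations. Those surviving monomials will each appear with coefficient exactly $2^{n-1}$.

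First, I expand the product over columns. Choosing, for each column $j$, a row $f(j)\in[n]$ gives
\begin{equation*}
\prod_{j=1}^n\left(\sum_{i=1}^n\delta_i x_{ij}\right)=\sum_{f:[n]\to[n]}\left(\prod_{j=1}^n \delta_{f(j)}\right)\prod_{j=1}^n x_{f(j),j}.
\end{equation*}
For a map $f$ and each $i\in[n]$, set $m_i=|f^{-1}(i)|$, so that $\sum_i m_i=n$. Then $\left(\prod_k\delta_k\right)\prod_j\delta_{f(j)}=\prod_{i=1}^n\delta_i^{1+m_i}$.

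The coefficient attached to $\prod_j x_{f(j),j}$ is therefore
\begin{equation*}
\sum_{\delta\in\mathcal{I}_n}\prod_{i=1}^n\delta_i^{1+m_i}=\prod_{i=2}^n\left(1+(-1)^{1+m_i}\right),
\end{equation*}
using $\delta_1=1$ and the fact that the remaining $\delta_i$ vary independently over $\pm1$. This coefficient equals $2^{n-1}$ if every $m_i$ with $i\ge 2$ is odd, and $0$ otherwise.

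The key step is the counting argument that the surviving $f$ are exactly the permutations. If $m_2,\dots,m_n$ are all odd, then each is at least $1$, so $m_1=n-\sum_{i\ge2}m_i\le 1$. Moreover, $\sum_{i\ge 2}m_i\equiv n-1 \pmod 2$, so $m_1\equiv 1\pmod 2$, which forces $m_1=1$. Then $\sum_{i\ge2}m_i=n-1$ with each $m_i\ge1$, so every $m_i=1$ and $f$ is a bijection. Conversely, every $f\in\mathfrak{S}_n$ has all $m_i=1$ and so survives.

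Hence the sum equals $2^{n-1}\sum_{\sigma\in\mathfrak{S}_n}\prod_j x_{\sigma(j),j}=2^{n-1}\operatorname{Perm}_n$. Here I use that the permanent is invariant under transposition, so it does not matter whether $\sigma$ indexes rows or columns. Dividing by $2^{n-1}$ is legitimate since $\operatorname{char}(\mathbb{K})=0$; only $\operatorname{char}\neq 2$ is actually needed.

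The only delicate point is the parity argument pinning down $m_1=1$. This is exactly where fixing $\delta_1=1$ matters: it halves the sum without losing information, because each summand is invariant under $\delta\mapsto-\delta$, its total degree in $\delta$ being $2n$.
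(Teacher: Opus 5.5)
Your proof is correct. The expansion over maps $f\colon[n]\to[n]$ is right, as is the evaluation of the character sum as $\prod_{i=2}^n\bigl(1+(-1)^{1+m_i}\bigr)$ and the counting/parity argument forcing $m_1=\cdots=m_n=1$. The paper gives no proof of this statement; it quotes Glynn's formula from \citep{glynn2010permanent}. So your argument is a self-contained replacement for the citation, not an alternative to an argument in the text.

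It does buy something for the way the paper uses the formula. Your coefficients are integers, and only the final division by $2^{n-1}$ needs a hypothesis. So, as you note, you prove the identity over every field with $\operatorname{char}(\mathbb{K})\neq 2$. That is exactly the generality asserted in Remark~\ref{remark:a variant of Glynn's formula} and used in Section~\ref{section:Fields of nonzero characteristic}.

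Because you work map by map, you can also replace $x_{f(j),j}$ by $h_{f(j),j}e_{f(j)}$ in the $j$-th tensor factor. The same computation then proves the tensor identity of Remark~\ref{remark:a variant of Glynn's formula} directly, without going through evaluation at $H\circ X$. This bookkeeping attaches the weight $\prod_j h_{\sigma(j),j}$ to $e_{\sigma(1)}\otimes\cdots\otimes e_{\sigma(n)}$. That coincides with the paper's $\prod_i h_{i,\sigma(i)}$ only because $H=\mathbf{1}_{n\times n}-2I_{n\times n}$ is symmetric.

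As a minor simplification, your closing symmetry observation makes the parity step unnecessary. The sum over $\mathcal{I}_n$ is half the sum over $\{\pm1\}^n$, whose coefficient is $\prod_{i=1}^n\bigl(1+(-1)^{1+m_i}\bigr)$. This is nonzero only when every $m_i$ is odd, hence, since $\sum_i m_i=n$, only when every $m_i=1$.
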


\begin{remark}\label{remark:a variant of Glynn's formula}
    For an $n \times n$ matrix $H=[h_{ij}]_{1 \leq i,j \leq n}$, we consider the Hadamard product $H \circ X=[h_{ij}x_{ij}]_{1 \leq i,j \leq n}$. Since
    \begin{equation*}
\operatorname{perm}_n(H \circ X)=\frac{1}{2^{n-1}}\sum_{\delta \in \mathcal{I}_n}\left(\prod_{k=1}^n \delta_k\right) \prod_{j=1}^n \left(\sum_{i=1}^n\delta_ih_{ij}x_{ij}\right)
\end{equation*}
    by Glynn's formula, we obtain
    \begin{equation*}
\begin{aligned}\operatorname{perm}_{n,H}&:=\sum_{\sigma \in \mathfrak{S}_n} \left(\prod_{i=1}^n h_{i, \sigma(i)}\right)e_{\sigma(1)} \otimes e_{\sigma(2)} \otimes {\cdots} \otimes e_{\sigma(n)}\\&=\frac{1}{2^{n-1}}\sum_{\delta \in \mathcal{I}_n}\left(\prod_{k=1}^n \delta_k\right) \bigotimes_{j=1}^n \left(\sum_{i=1}^n\delta_ih_{ij}e_i\right).
\end{aligned}\end{equation*}
    In particular, $\mathbf{R}(\operatorname{perm}_{n,H})\leq 2^{n-1}$.
    We will use this property when we prove Theorem \ref{thm:key Theorem}. Since Glynn's formula holds over an arbitrary field of characteristic $\neq 2$, we will use this property in Section \ref{section:Fields of nonzero characteristic} as well.
\end{remark}

\section{Determinant formula using Glynn's formula}\label{section:Determinant formula using Glynn's formula}

Theorem \ref{thm:key Theorem} below is motivated by the comparison of $\operatorname{det}_4$ and $\operatorname{perm}_4$. 
Over a field of characteristic $2$, $|P|!=0$ whenever $|P|\ge 2$, so every such term in the decomposition \eqref{eq:Houston formula of det4} of $\operatorname{det}_4$ vanishes. The remaining terms are the term for the empty partial partition and the terms for partial partitions consisting of a single part. In this characteristic, $\operatorname{det}_4=\operatorname{perm}_4$.
Hence, if we collect the terms corresponding to partial partitions consisting of a single part or an empty set, then we obtain the tensor decomposition of $\operatorname{perm}_4$ over a field of characteristic $2$. We prove that such a property holds for all $n$ and even over a field of characteristic zero.

\begin{theorem}\label{thm:key Theorem}
    Let $H=[h_{ij}]_{1 \leq i,j \leq n}=\mathbf{1}_{n \times n}-2I_{n \times n}$, and let $\mathcal{I}_n$ be a set defined in (\ref{eq:index set I}). Then the $n \times n$ determinant tensor has the following tensor decomposition:
    \begin{equation}\label{eq:new tensor decomposition using Glynn's formula}
\begin{aligned}
\operatorname{det}_n=2e_1 \otimes e_2 \otimes {\cdots} \otimes e_n  +\left(\sum_{\substack{P \in \operatorname{PP}(n) \\ |P| \geq 2}} \operatorname{sgn}(P)\cdot |P|!\bigotimes_{i=1}^n v_i(P)\right)
        +\frac{(-1)^{n+1}}{2^{n-1}}\sum_{\delta \in \mathcal{I}_n}\left(\prod_{k=1}^n \delta_k\right) \bigotimes_{j=1}^n \left(\sum_{i=1}^n\delta_ih_{ij}e_i\right),
\end{aligned}
\end{equation}
    where $\operatorname{sgn}(P)$ and $v_i(P)$ are defined as in Theorem \ref{theorem:bell number}. 
\end{theorem}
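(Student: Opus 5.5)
The plan is to reduce the statement to an identity between the low-order terms of Theorem \ref{theorem:bell number} and the twisted permanent $\operatorname{perm}_{n,H}$ of Remark \ref{remark:a variant of Glynn's formula}. Split the sum \eqref{eq:Houston formula} into three parts. The first is the term $T_0$ for the empty partial partition; there $v_i=e_i$, so $T_0=e_1\otimes\cdots\otimes e_n$. The second is the part $T_1$ coming from partial partitions $P=\{S\}$ with a single part, where $|S|\ge 2$. The third is the part with $|P|\ge 2$, which appears unchanged in \eqref{eq:new tensor decomposition using Glynn's formula}. So it suffices to prove
\begin{equation*}
T_1 - T_0=(-1)^{n+1}\operatorname{perm}_{n,H},
\end{equation*}
and then substitute the Glynn-type decomposition of $\operatorname{perm}_{n,H}$ from Remark \ref{remark:a variant of Glynn's formula}. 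Since $h_{ii}=-1$ and $h_{ij}=1$ for $i\ne j$, we have
\begin{equation*}
\operatorname{perm}_{n,H}=\sum_{\sigma\in\mathfrak{S}_n}(-1)^{\operatorname{fix}(\sigma)}e_{\sigma(1)}\otimes\cdots\otimes e_{\sigma(n)},
\end{equation*}
where $\operatorname{fix}(\sigma)$ is the number of fixed points of $\sigma$.

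To compute $T_1$, expand each product $\bigotimes_i v_i(\{S\})$ in the standard basis and compute the coefficient of $e_{f(1)}\otimes\cdots\otimes e_{f(n)}$ for an arbitrary map $f:[n]\to[n]$. Let $N=\{i: f(i)\ne i\}$ be the set of non-fixed points of $f$. The tensor $e_f$ occurs in the expansion for $S$ exactly when two conditions hold. First, $f(i)\in S\setminus\{i\}$ for every $i\in S$, which forces $S\subseteq N$. Second, $f(i)\in S\cup\{i\}$ for every $i\notin S$, which forces $f(N)\subseteq S$. Once $S\supseteq f(N)$, the first condition for $i\in S$ is automatic. Each admissible $S$ contributes exactly once, with sign $(-1)^{|S|+1}$. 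Hence the coefficient of $e_f$ is
\begin{equation*}
\sum_{\substack{f(N)\subseteq S\subseteq N\\ |S|\ge 2}}(-1)^{|S|+1}.
\end{equation*}

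The main step is evaluating this alternating sum, and the only delicate point is the restriction $|S|\ge 2$. If $N=\emptyset$, then $f=\mathrm{id}$ and there is no admissible $S$, so the coefficient is $0$. If $N\neq\emptyset$, then $|f(N)|\ge 2$. Indeed, if $f(N)=\{a\}$ then $a\in N$, so $f(a)\ne a$, yet $f(a)\in f(N)=\{a\}$, a contradiction. So the constraint $|S|\ge 2$ is vacuous, and the sum runs over the full Boolean interval $[f(N),N]$. This alternating sum vanishes unless $f(N)=N$. When $f(N)=N$, the map $f$ restricted to $N$ is a bijection and $f$ fixes the complement pointwise, so $f=\sigma$ is a permutation. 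In that case the coefficient is $(-1)^{|N|+1}=(-1)^{n-\operatorname{fix}(\sigma)+1}$. Therefore
\begin{equation*}
T_1=\sum_{\sigma\ne \mathrm{id}}(-1)^{n+1}(-1)^{\operatorname{fix}(\sigma)}e_{\sigma(1)}\otimes\cdots\otimes e_{\sigma(n)}.
\end{equation*}

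Finally, compare this with $(-1)^{n+1}\operatorname{perm}_{n,H}$. The two agree on all $\sigma\ne\mathrm{id}$. The identity term of $(-1)^{n+1}\operatorname{perm}_{n,H}$ has coefficient $(-1)^{n+1}(-1)^{n}=-1$, which is exactly $-T_0$. This gives $T_1-T_0=(-1)^{n+1}\operatorname{perm}_{n,H}$. Hence $\operatorname{det}_n=2T_0+(\text{terms with }|P|\ge2)+(-1)^{n+1}\operatorname{perm}_{n,H}$, and inserting Glynn's formula yields \eqref{eq:new tensor decomposition using Glynn's formula}.
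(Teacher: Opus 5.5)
Your proposal is correct and follows essentially the same route as the paper: split the Houston--Goucher--Johnston sum into the empty, single-part, and $|P|\ge 2$ pieces, and characterize the admissible $S$ by $f(N)\subseteq S\subseteq N$. You then kill the alternating sum unless $f$ is a permutation and match the result with $(-1)^{n+1}\operatorname{perm}_{n,H}$ plus the identity term before invoking Glynn's formula; the only bookkeeping difference is that the paper includes $S=\emptyset$ in the alternating sum and subtracts it afterwards, while you exclude $|S|\le 1$ directly. One small slip: $|f(N)|\ge 2$ does not follow from $N\neq\emptyset$ alone (e.g.\ $f(1)=f(2)=2$), only once $f(N)\subseteq N$ — but when $f(N)\not\subseteq N$ the interval $[f(N),N]$ is empty and the coefficient is $0$ anyway, so your conclusion is unaffected.
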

\begin{proof}
     If we prove that the $n \times n$ determinant tensor has the following tensor decomposition:
    \begin{equation}\label{eq:new tensor decomposition}
\begin{aligned}
\operatorname{det}_n=2e_1 \otimes e_2 \otimes {\cdots} \otimes e_n +\left(\sum_{\substack{P \in \operatorname{PP}(n) \\ |P| \geq 2}} \operatorname{sgn}(P)\cdot |P|!\bigotimes_{i=1}^n v_i(P)\right)  + (-1)^{n+1}\operatorname{perm}_{n,H},
\end{aligned}
\end{equation}
    where 
    \begin{equation*}
\begin{aligned}
\operatorname{perm}_{n,H}&:=\sum_{\sigma \in \mathfrak{S}_n} \left(\prod_{i=1}^n h_{i, \sigma(i)}\right)e_{\sigma(1)} \otimes e_{\sigma(2)} \otimes {\cdots} \otimes e_{\sigma(n)},
\end{aligned}
\end{equation*}
    then the assertion holds by Glynn's formula (see Remark \ref{remark:a variant of Glynn's formula}). From now on, we prove (\ref{eq:new tensor decomposition}).
    In the tensor decomposition (\ref{eq:Houston formula}), we consider the terms for $P \in \operatorname{PP}(n)$ consisting of a single part:
    \begin{equation}\label{eq:summands corresponding to single part partial partitions}
\sum_{\{S\} \in \operatorname{PP}(n)} \operatorname{sgn}(\{S\})\bigotimes_{i=1}^n v_i(\{S\}).
\end{equation}
    Here, we have $\operatorname{sgn}(\{S\})=(-1)^{|S|+1}$ and 
\begin{equation*}
v_i(\{S\})=\begin{cases}
        \sum_{j \in S \setminus \{i\}}e_j&\text{if}~i \in S\\
        e_i+\sum_{j \in S}e_j&\text{if}~i \notin S
    \end{cases}.
\end{equation*} For each function $f:[n] \rightarrow [n]$, we associate a rank-one tensor
    \begin{equation}\label{eq:decomposable tensor corresponding to f}
e_{f(1)}\otimes e_{f(2)} \otimes {\cdots} \otimes e_{f(n)}.
\end{equation}
    We claim that a nonzero constant multiple of the rank-one tensor (\ref{eq:decomposable tensor corresponding to f}) appears in the expansion of $\bigotimes_{i=1}^n v_i(\{S\})$ if and only if the function $f$ satisfies the inclusions:
    \begin{equation}\label{eq:key inclusions}
\{f(i)~|~f(i) \neq i\} \subset S \subset [n] \setminus \{i~|~f(i)=i\}.
\end{equation}
    
    Assume that the hypothesis of the claim holds. Then, for each $i \in [n]$, the vector $e_{f(i)}$ is a summand of $v_i(\{S\})$. If $k \in S$, then $e_{f(k)} \in \{e_j~|~j \in S \setminus \{k\}\}$, and so $f(k) \neq k$. Hence, we obtain $S \subset [n] \setminus \{i~|~f(i)=i\}$. Now, we show that $\{f(i)~|~f(i) \neq i\} \subset S$. Let $k \in [n]$ satisfy $f(k) \neq k$. There are two cases: either $k \in S$ or $k \notin S$. If $k \in S$, then $e_{f(k)} \in \{e_j~|~j \in S \setminus \{k\}\}$, and so $f(k) \in S \setminus \{k\}$. If $k \notin S$, then $e_{f(k)} \in \{e_k\} \cup \{e_j~|~j \in S\}$, and so $f(k) \in \{k\} \cup S$. Since $f(k) \neq k$, we obtain $f(k) \in S$ in this case. 
    
    Now, we prove the converse. Assume that $S$ satisfies (\ref{eq:key inclusions}). Consider $v_k(\{S\})$ for a $k \in [n]$. First, assume that $k \in S$. Since $S \subset [n] \setminus \{i~|~f(i)=i\}$, we obtain $f(k) \neq k$. Since $\{f(i)~|~f(i) \neq i\} \subset S$, then $f(k) \in S$. Hence, $e_{f(k)}$ is a summand of $v_k(\{S\})=\sum_{j \in S \setminus \{k\}} e_j$. Now, assume that $k \notin S$. There are two cases: either $f(k)=k$ or $f(k) \neq k$. If $f(k)=k$, then $e_{f(k)}$ is a summand of $v_k(\{S\})=e_k+\sum_{j \in S}e_j=e_{f(k)}+\sum_{j \in S}e_j$. If $f(k) \neq k$, then $f(k) \in S$, and so $e_{f(k)}$ is a summand of $v_k(\{S\})=e_{k}+\sum_{j \in S}e_j$. Hence, the claim holds.

    If $c_fe_{f(1)} \otimes {\cdots} \otimes e_{f(n)}$ (where $c_f$ is a constant) appears in the expansion of (\ref{eq:summands corresponding to single part partial partitions}), then 
    \begin{equation*}
c_f=\sum_{\substack{S \subset [n]\\ (\ref{eq:key inclusions})~\text{holds}}} (-1)^{|S|+1}.
\end{equation*}
    If $\{f(i)~|~f(i) \neq i\} \cup \{i~|~f(i)=i\} \subsetneq [n]$, then there exists an element $s \in [n] \setminus (\{f(i)~|~f(i) \neq i\} \cup \{i~|~f(i)=i\})$ so that
    \begin{equation*}
c_f=\sum_{\substack{s \in S \subset [n]\\ (\ref{eq:key inclusions})~\text{holds}}} (-1)^{|S|+1}+\sum_{\substack{s \notin S \subset [n]\\ (\ref{eq:key inclusions})~\text{holds}}} (-1)^{|S|+1}=0.
\end{equation*}
    If $\{f(i)~|~f(i) \neq i\} \cup \{i~|~f(i)=i\} = [n]$, then $f$ is a permutation, and so $S \subset [n]$ satisfying (\ref{eq:key inclusions}) is uniquely determined as $S=\{f(i)~|~f(i) \neq i\}$. Hence, $c_f=(-1)^{|S|+1}=(-1)^{n-|\{i~|~f(i)=i\}|+1} \neq 0$ in this case. Thus, the expansion of (\ref{eq:summands corresponding to single part partial partitions}) is
    \begin{equation}\label{eq:the expansion}
\begin{aligned}&\left(\sum_{\sigma \in \mathfrak{S}_n} (-1)^{n-|\{i~|~\sigma(i)=i\}|+1}e_{\sigma(1)} \otimes {\cdots} \otimes e_{\sigma(n)}\right)\\&\quad -(-1)^{n-n+1}e_{1} \otimes e_{2}\otimes {\cdots} \otimes e_{n},
\end{aligned}
\end{equation}
    because the last term $(-1)^{n-n+1}e_{1} \otimes e_{2}\otimes {\cdots} \otimes e_{n}$ corresponds to the empty partial partition. On the other hand, the entries $h_{ij}$ of $H$ satisfy
    \begin{equation*}
\prod_{i=1}^n h_{i,\sigma(i)}=(-1)^{|\{i~|~\sigma(i)=i\}|}.
\end{equation*}
    Therefore, the formula (\ref{eq:the expansion}) is the same as
    \begin{equation*}
(-1)^{n+1}\operatorname{perm}_{n,H}+~e_1 \otimes e_2 \otimes {\cdots} \otimes e_n,
\end{equation*}
    and so the equation (\ref{eq:new tensor decomposition}) follows.
\end{proof}

We obtain Theorem \ref{thm:Main Theorem} as a corollary of Theorem \ref{thm:key Theorem}, if we use the relation (\ref{eq:inequality between Waring rank and tensor rank}) additionally for the Waring rank.

\begin{remark}\label{remark:comparison with another tensor decomposition}
    Let $\mathbb{K}=\mathbb{R}$ or $\mathbb{C}$. The tensor decomposition
    \begin{equation*}\label{eq:Derksen's formula}
\begin{aligned}
			\operatorname{det}_3=&\frac{1}{2}((e_2+e_3) \otimes (e_1-e_2) \otimes (e_1+e_2)\\
			&+(e_1+e_2) \otimes (e_2-e_3) \otimes (e_2+e_3)\\
			&-2e_2\otimes (e_1-e_3) \otimes (e_1+e_3)\\
			&+(e_2-e_3) \otimes (e_1+e_2) \otimes (e_1-e_2)\\
			&-(e_1-e_2) \otimes (e_2+e_3) \otimes (e_2-e_3))
		\end{aligned}
\end{equation*}
    of $\operatorname{det}_3$ suggested in \citep{derksen2016nuclear} can be obtained by the diagonal action of $A_3:=\frac{1}{\sqrt[3]{4}}\begin{bmatrix}
            0 & 1 & 1\\
            -2 & 0 & 0\\
            0 & -1 & 1
        \end{bmatrix} \in \operatorname{SL}_3$
    on the tensor decomposition (\ref{eq:new tensor decomposition using Glynn's formula}) for $n=3$:
    \begin{equation*}
\begin{aligned}
            \operatorname{det}_3&=2e_1\otimes e_2 \otimes e_3\\
            &+\frac{1}{4}(-e_1+e_2+e_3) \otimes (e_1-e_2+e_3) \otimes (e_1+e_2-e_3)\\
            &-\frac{1}{4}(-e_1+e_2-e_3) \otimes (e_1-e_2-e_3) \otimes (e_1+e_2+e_3)\\
            &-\frac{1}{4}(-e_1-e_2+e_3) \otimes (e_1+e_2+e_3) \otimes (e_1-e_2-e_3)\\
            &+\frac{1}{4}(-e_1-e_2-e_3) \otimes (e_1+e_2-e_3) \otimes (e_1-e_2+e_3).
        \end{aligned}
\end{equation*}
    Furthermore, the tensor decomposition
    \begin{equation*}
\begin{aligned}
		{\operatorname{det}}_4= &\frac{1}{2}((e_1-e_2)\otimes (e_3-e_4)\otimes (e_3+e_4)\otimes (e_1+e_2)\\
			&-(e_1-e_3)\otimes (e_2-e_4)\otimes (e_2+e_4)\otimes (e_1+e_3)\\
			&+(e_1-e_4)\otimes (e_2-e_3)\otimes (e_2+e_3)\otimes (e_1+e_4)\\
			&+(e_2-e_3)\otimes (e_1-e_4)\otimes (e_1+e_4)\otimes (e_2+e_3)\\
			&-(e_2-e_4)\otimes (e_1-e_3)\otimes (e_1+e_3)\otimes (e_2+e_4)\\
			&+(e_3-e_4)\otimes (e_1-e_2)\otimes (e_1+e_2)\otimes (e_3+e_4)\\
			&+(e_1+e_2)\otimes (e_3+e_4)\otimes (e_3-e_4)\otimes (e_1-e_2)\\
			&-(e_1+e_3)\otimes (e_2+e_4)\otimes (e_2-e_4)\otimes (e_1-e_3)\\
			&+(e_1+e_4)\otimes (e_2+e_3)\otimes (e_2-e_3)\otimes (e_1-e_4)\\
			&+(e_2+e_3)\otimes (e_1+e_4)\otimes (e_1-e_4)\otimes (e_2-e_3)\\
			&-(e_2+e_4)\otimes (e_1+e_3)\otimes (e_1-e_3)\otimes (e_2-e_4)\\
			&+(e_3+e_4)\otimes (e_1+e_2)\otimes (e_1-e_2)\otimes (e_3-e_4))
	\end{aligned}
\end{equation*}
    of $\operatorname{det}_4$ suggested in \citep{ju2025finding} can be obtained by the diagonal action of $A_4:=\frac{1}{\sqrt{2}}\begin{bmatrix}
            1 & 0 & 0 & 1\\
            0 & -1 & -1 & 0\\
            0 & 1 & -1 & 0\\
            -1 & 0 & 0 & 1
        \end{bmatrix} \in \operatorname{SL}_4$ 
    on the tensor decompositions (\ref{eq:new tensor decomposition using Glynn's formula}) for $n=4$:
    \begin{equation*}
\adjustbox{max width=\linewidth}{$\displaystyle
\begin{aligned}
            \operatorname{det}_4&=2e_1 \otimes e_2 \otimes e_3 \otimes e_4\\
            &+2e_2 \otimes e_1 \otimes e_4 \otimes e_3\\
            &+2e_3 \otimes e_4 \otimes e_1 \otimes e_2\\
            &+2e_4 \otimes e_3 \otimes e_2 \otimes e_1\\
            &-\frac{1}{8}(-e_1+e_2+e_3+e_4) \otimes (e_1-e_2+e_3+e_4) \otimes (e_1+e_2-e_3+e_4) \otimes (e_1+e_2+e_3-e_4)\\
            &+\frac{1}{8}(-e_1-e_2+e_3+e_4) \otimes (e_1+e_2+e_3+e_4) \otimes (e_1-e_2-e_3+e_4) \otimes (e_1-e_2+e_3-e_4)\\
            &+\frac{1}{8}(-e_1+e_2-e_3+e_4) \otimes (e_1-e_2-e_3+e_4) \otimes (e_1+e_2+e_3+e_4) \otimes (e_1+e_2-e_3-e_4)\\
            &+\frac{1}{8}(-e_1+e_2+e_3-e_4) \otimes (e_1-e_2+e_3-e_4) \otimes (e_1+e_2-e_3-e_4) \otimes (e_1+e_2+e_3+e_4)\\
            &-\frac{1}{8}(-e_1-e_2-e_3+e_4) \otimes (e_1+e_2-e_3+e_4) \otimes (e_1-e_2+e_3+e_4) \otimes (e_1-e_2-e_3-e_4)\\
            &-\frac{1}{8}(-e_1-e_2+e_3-e_4) \otimes (e_1+e_2+e_3-e_4) \otimes (e_1-e_2-e_3-e_4) \otimes (e_1-e_2+e_3+e_4)\\
            &-\frac{1}{8}(-e_1+e_2-e_3-e_4) \otimes (e_1-e_2-e_3-e_4) \otimes (e_1+e_2+e_3-e_4) \otimes (e_1+e_2-e_3+e_4)\\
            &+\frac{1}{8}(-e_1-e_2-e_3-e_4) \otimes (e_1+e_2-e_3-e_4) \otimes (e_1-e_2+e_3-e_4) \otimes (e_1-e_2-e_3+e_4).
        \end{aligned}
$}
\end{equation*}
\end{remark}

\Needspace{8\baselineskip}
\section{Fields of nonzero characteristic}\label{section:Fields of nonzero characteristic}

Let $\mathcal{B}_{n,k}$ denote the number of partial partitions of $[n]$ that contain exactly $k$ parts and no singletons. We refer to \citep{houston2024bell} and the references therein for the formulas of some $\mathcal{B}_{n,k}$. We obtain the following corollary from Theorem \ref{thm:key Theorem}.

\begin{corollary}
    Assume that $\operatorname{char}(\mathbb{K})=p>2$. Let $H=[h_{ij}]$ be the $n \times n$ matrix defined in Theorem \ref{thm:key Theorem}. Let $\mathcal{I}_n$ be a set defined in (\ref{eq:index set I}). Then the $n \times n$ determinant tensor has the following tensor decomposition:
    \begin{equation*}
\begin{aligned}
\operatorname{det}_n&=2e_1 \otimes e_2 \otimes {\cdots} \otimes e_n \\&\quad +\left(\sum_{\substack{P \in \operatorname{PP}(n) \\ 2 \leq |P| \leq p-1}} \operatorname{sgn}(P)\cdot |P|!\bigotimes_{i=1}^n v_i(P)\right)\\&\quad +\frac{(-1)^{n+1}}{2^{n-1}}\sum_{\delta \in \mathcal{I}_n}\left(\prod_{k=1}^n \delta_k\right) \bigotimes_{j=1}^n \left(\sum_{i=1}^n\delta_ih_{ij}e_i\right),
\end{aligned}
\end{equation*}
    where $\operatorname{sgn}(P)$ and $v_i(P)$ are defined as in Theorem \ref{theorem:bell number}. In particular,
    \begin{equation*}
\mathbf{R}(\operatorname{det}_n) \leq \sum_{k=0}^{p-1}\mathcal{B}_{n,k} -2^{n-1}+n+1.
\end{equation*}
\end{corollary}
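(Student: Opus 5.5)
The plan is to rerun the proof of Theorem \ref{thm:key Theorem} over a field of characteristic $p>2$, checking that every step is characteristic-free, and then remove the vanishing summands.

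First, observe that identity (\ref{eq:new tensor decomposition}) is an identity of tensors with integer coefficients in the standard basis. Every coefficient $\operatorname{sgn}(P)\cdot|P|!$ is an integer. The entries of $\operatorname{perm}_{n,H}$ are $\pm1$. The combinatorial argument computing the coefficients $c_f$ only uses signed counts of subsets $S$. Hence both sides of (\ref{eq:new tensor decomposition}) are images of tensors in $(\mathbb{Z}^n)^{\otimes n}$, and they agree there: Theorem \ref{theorem:bell number} and the argument above hold over $\mathbb{Q}$, and $\mathbb{Z}\subset\mathbb{Q}$. Tensoring with $\mathbb{K}$, i.e.\ applying the ring map $\mathbb{Z}\to\mathbb{K}$, the identity (\ref{eq:new tensor decomposition}) holds over $\mathbb{K}$.

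Second, in characteristic $p$ we have $|P|!=0$ whenever $|P|\ge p$. So the middle sum reduces to partial partitions with $2\le |P|\le p-1$. Third, replace $(-1)^{n+1}\operatorname{perm}_{n,H}$ by Glynn's expression from Remark \ref{remark:a variant of Glynn's formula}. This is legitimate since Glynn's formula holds in characteristic $\neq 2$, and $2^{n-1}$ is invertible because $p>2$. This yields the displayed decomposition.

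For the rank bound, count the summands. There is $1$ from $2e_1\otimes\cdots\otimes e_n$, then $\sum_{k=2}^{p-1}\mathcal{B}_{n,k}$ from the middle sum, and $2^{n-1}$ from the Glynn part. Using $\mathcal{B}_{n,0}=1$ and $\mathcal{B}_{n,1}=2^n-n-1$ (the subsets of size at least $2$), the total is
\begin{equation*}
1+\sum_{k=0}^{p-1}\mathcal{B}_{n,k}-1-(2^n-n-1)+2^{n-1}=\sum_{k=0}^{p-1}\mathcal{B}_{n,k}-2^{n-1}+n+1,
\end{equation*}
as claimed.

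The only step needing care is the first: Theorem \ref{theorem:bell number} was cited without a characteristic hypothesis. The integer-lattice argument handles this, since it only needs the identity over $\mathbb{Q}$.
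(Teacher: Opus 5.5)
Your proposal is correct and follows the route the paper intends. The paper derives the corollary directly from Theorem \ref{thm:key Theorem}: $|P|!$ vanishes in characteristic $p$ for $|P|\geq p$, Glynn's formula (Remark \ref{remark:a variant of Glynn's formula}) remains valid when $\operatorname{char}(\mathbb{K})\neq 2$, and the same term count with $\mathcal{B}_{n,0}=1$ and $\mathcal{B}_{n,1}=2^n-n-1$ gives the bound. Your integer-lattice argument for carrying identity (\ref{eq:new tensor decomposition}) from characteristic zero to characteristic $p$ makes explicit a step the paper leaves implicit.
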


\section{Further directions}\label{section:Further directions}

It has already been proved in \citep{derksen2019explicit, han2025recursive}, respectively, that $\mathbf{R}(\operatorname{det}_3)=5$ and $\mathbf{R}(\operatorname{det}_4)=12$. However, from Theorem \ref{thm:Main Theorem} and \citep{han2025recursive}, we only obtain the bounds $27 \leq \mathbf{R}(\operatorname{det}_5) \leq 42$. We leave the determination of its exact value as a problem for future work. 

The $n \times n$ determinant tensor $\operatorname{det}_n$ is invariant under the diagonal action of $\operatorname{SL}_n$. Hence, the action of $\operatorname{SL}_n$ on a tensor rank decomposition of $\operatorname{det}_n$ produces another decomposition of the same tensor. In Remark \ref{remark:comparison with another tensor decomposition}, we verified that the two known tensor rank decompositions of $\operatorname{det}_n$ are in the same orbit under the $\operatorname{SL}_n$-action for $n=3,4$. We leave the investigation of this orbit for general $n$ as a problem for future work.

\section*{Declaration of generative AI use}

During the preparation of this work, the authors used GPT 5.6 Sol solely to assist in obtaining the  matrices $A_3$ and $A_4$ in Remark \ref{remark:comparison with another tensor decomposition}. The authors take full responsibility for the content of the manuscript.

\section*{Acknowledgements}

The work of J.-H. Ju was supported by the Novo Nordisk Foundation (grant NNF20OC0059939). 
This work of H. Choi and T. Kim was supported by the National Research Foundation of Korea(NRF) grant funded by the Korea government(MSIT) (No. RS-2022-NR070839 and RS-2024-00342939). 
T. Kim was supported by the National Research Foundation of Korea(NRF) grant funded by the Korea government(MSIT) (No. RS-2025-25436769).


\end{document}